\newtheorem{thm}{Theorem}
\newtheorem{conj}[thm]{Conjecture}
\newtheorem{Remark}[thm]{Remark}
\newenvironment{remark}
  {\begin{Remark}\rm}{\end{Remark}}
\newcommand{\emailhref}[1]{\email{\href{#1}{#1}}}
\title[Two $t$-analogues of the tree inversion enumerator]{Two $t$-analogues of the\\ tree inversion enumerator}
\author{Sam Hopkins}\emailhref{samuelfhopkins@gmail.com}
\address{Department of Mathematics, Howard University, Washington, DC}
\date{May 21, 2026}
\begin{document}

\begin{abstract}
In this note, we introduce two $t$-analogues $I_n(q,t)$ and $\widetilde{I}_n(q,t)$ of the tree inversion enumerator $I_n(q)$. Although similar, $I_n(q,t)$ and $\widetilde{I}_n(q,t)$ are different. But they both seem to have interesting properties. In particular, we conjecture that their $q=-1$ specializations give two different, natural refinements of the zigzag numbers counting alternating permutations.
\end{abstract}

\maketitle

For $n \geq 0$, let $\mathrm{Tree}(n+1)$ denote the set of trees on vertex set $\{0,1,\ldots,n\}$. Cayley's formula says $\#\mathrm{Tree}(n+1)=(n+1)^{n-1}$. We think of a tree $T \in \mathrm{Tree}(n+1)$ as rooted at the vertex~$0$, and define an \emph{inversion} of such a $T$ to be a pair $(i,j)$ with $1 \leq i < j \leq n$ such that $j$ is on the unique path from $i$ to the root $0$. We use $\mathrm{inv}(T)$ to denote the number of inversions of $T$. We then define
\begin{equation} \label{eq:inv_enum_tree}
    I_n(q) := \sum_{T\in \mathrm{Tree}(n+1)}q^{\mathrm{inv}(T)}
\end{equation}
to be the tree inversion enumerator. A lot is known about $I_n(q)$. For example, Kreweras~\cite{kreweras1980famille} showed that $I_n(q)$ is determined by the recurrence
\begin{align} \label{eqn:recur}
\notag I_0(q) &= 1;\\
 I_n(q) &= \sum_{i=0}^{n-1}\binom{n-1}{i}(1+q+\cdots+q^{i})I_i(q)I_{n-1-i}(q) \textrm{ for $n\geq 1$}.
\end{align}

For $n \geq 0$, a \emph{parking function} of length $n$ is a sequence $\pi=(\pi_1,\ldots,\pi_n)$ of~$n$ positive integers whose weakly increasing rearrangement $\pi_{i_1}\leq\pi_{i_2}\leq\cdots\leq\pi_{i_n}$ satisfies $\pi_{i_j} \leq j$ for all $1\leq j \leq n$. Let $\mathrm{PF}(n)$ denote the set of parking functions of length~$n$. It is well-known that also $\#\mathrm{PF}(n)=(n+1)^{n-1}$. In fact, defining the \emph{cosum} of a parking function $\pi\in\mathrm{PF}(n)$ to be $\mathrm{cosum}(a):=\binom{n+1}{2}-\sum_{i=1}^{n}\pi_i$, Kreweras~\cite{kreweras1980famille} showed that
\begin{equation} \label{eq:inv_enum_pf}
    I_n(q)=\sum_{\pi\in\mathrm{PF}(n)}q^{\mathrm{cosum}(\pi)}
\end{equation}
as well. He did this by showing that the parking function cosum enumerator satisfies the same recurrence~\eqref{eqn:recur} as the tree inversion enumerator.

Here we introduce two $t$-analogues of $I_n(q)$, one based on~\eqref{eq:inv_enum_tree} and one based on~\eqref{eq:inv_enum_pf}. For $T\in\mathrm{Tree}(n+1)$, let $\mathrm{lev}(T)$ denote the number of \emph{leaves} of~$T$. And for $\pi \in \mathrm{PF}(n)$, let $\mathrm{exced}(\pi)$ denote the number of \emph{(strict) excedances} of~$\pi$, i.e., the number of $1\leq i \leq n$ such that $\pi_i > i$. Then we define
\begin{align*}
I_n(q,t) &:= \sum_{T\in \mathrm{Tree}(n+1)}q^{\mathrm{inv}(T)}t^{\mathrm{lev(T)-1}};\\
\widetilde{I}_n(q,t) &:= \sum_{\pi\in \mathrm{PF}(n)} q^{\mathrm{cosum}(\pi)}t^{\mathrm{exced}(\pi)}.
\end{align*}

There are many similarities between $I_n(q,t)$ and $\widetilde{I}_n(q,t)$. For instance, they both interpolate between the Mahonian and Eulerian distributions on the symmetric group $S_n$. For a permutation $\sigma \in S_n$, we use $\mathrm{inv}(\sigma)$ to denote the number of inversions of $\sigma$ and $\mathrm{des}(\sigma)$ to denote the number of descents of $\sigma$. We then recall that $\sum_{\sigma \in S_n}q^{\mathrm{inv}(\sigma)} = [n]_q!$ is the \emph{Mahonian polynomial} (where we use the standard $q$-number notation $[k]_q=(1+q+\cdots+q^{k-1})$ and $[n]_q!=[n]_q[n-1]_q\cdots[1]_q$), while $\sum_{\sigma \in S_n}t^{\mathrm{des}(\sigma)} = \sum_{k=0}^{n}A(n,k)t^k=A_n(t)$ is the \emph{Eulerian polynomial}.

\begin{thm}
We have
\[I_n(q,0)=\widetilde{I}_n(q,0)= [n]_q!\]
and
\[I_n(0,t)=\widetilde{I}_n(0,t) = A_n(t).\]
\end{thm}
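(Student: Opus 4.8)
The plan is to establish the four identities one at a time, in each case specializing a single variable to $0$ so that the defining sum collapses onto a distinguished subfamily, and then identifying the surviving distribution. Throughout I would fix the convention that a \emph{leaf} of a tree rooted at $0$ is a vertex with no children, so that the root is never a leaf (for $n\geq 1$ the root always has a child, so this only affects the trivial case $n=0$, which I would check by hand).

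For the two claims at $t=0$: in $I_n(q,0)$ only terms with $\mathrm{lev}(T)=1$ survive, i.e.\ trees with a single childless vertex. Such a tree must be a path $0=v_0-v_1-\cdots-v_n$, and setting $\sigma(k)=v_k$ gives a bijection with $S_n$. Since the proper ancestors of $v_k$ are exactly $v_0,\ldots,v_{k-1}$, a tree inversion $(i,j)$ with $i=v_k$ forces $j=v_m$ for some $m<k$ with $v_m>v_k$; hence $\mathrm{inv}(T)$ equals the number of inversions of $\sigma$, and summing gives $[n]_q!$. In $\widetilde{I}_n(q,0)$ only $\pi$ with $\mathrm{exced}(\pi)=0$ survive, i.e.\ sequences with $1\le \pi_i\le i$ for all $i$. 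I would first check that every such sequence is automatically a parking function (for any threshold $j$, the constraints $\pi_i\le i$ already force at least $j$ entries to be $\le j$), so these are precisely the $n!$ sequences in $\prod_i\{1,\ldots,i\}$. A direct factorization then gives
\[
\sum_{\pi:\,\pi_i\le i} q^{\mathrm{cosum}(\pi)} = q^{\binom{n+1}{2}}\prod_{i=1}^{n}\sum_{a=1}^{i}q^{-a} = \prod_{i=1}^{n}[i]_q = [n]_q!,
\]
where the powers $q^{\binom{n+1}{2}}$ and $\prod_i q^{-i}$ cancel.

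For the two claims at $q=0$: in $\widetilde{I}_n(0,t)$ only $\pi$ with $\mathrm{cosum}(\pi)=0$ survive, i.e.\ $\sum_i\pi_i=\binom{n+1}{2}$. Since any parking function has weakly increasing rearrangement entrywise bounded by $(1,2,\ldots,n)$, this maximal sum forces the rearrangement to equal $(1,2,\ldots,n)$, so $\pi$ ranges exactly over the permutations in $S_n$; then $\mathrm{exced}(\pi)$ is the ordinary excedance statistic, and I would invoke the classical fact that excedances are Eulerian, $\sum_{\pi\in S_n}t^{\mathrm{exced}(\pi)}=A_n(t)$. In $I_n(0,t)$ only trees with $\mathrm{inv}(T)=0$ survive, and $\mathrm{inv}(T)=0$ says no vertex has a larger-labeled ancestor, i.e.\ $T$ is an increasing tree (labels increase away from the root), of which there are $n!$.

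The one genuinely nontrivial step, which I expect to be the main obstacle, is showing that the number of leaves of increasing trees is Eulerian. Here I would use the standard insertion recursion: every increasing tree on $\{0,\ldots,n\}$ arises by attaching vertex $n$ to some vertex of an increasing tree on $\{0,\ldots,n-1\}$. Attaching $n$ to an existing leaf leaves the leaf count unchanged, while attaching it to one of the internal vertices raises the count by one. Writing $L_n(\ell)$ for the number of increasing trees on $n+1$ vertices with $\ell$ leaves, and noting that a tree on $\{0,\ldots,n-1\}$ with $\ell'$ leaves has $n-\ell'$ internal vertices, this yields
\[
L_n(\ell) = \ell\, L_{n-1}(\ell) + (n-\ell+1)\,L_{n-1}(\ell-1),
\]
which under $\ell=k+1$ is exactly the Eulerian recurrence $A(n,k)=(k+1)A(n-1,k)+(n-k)A(n-1,k-1)$. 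After checking the base case $n=1$ directly, induction gives $L_n(\ell)=A(n,\ell-1)$ and hence $I_n(0,t)=\sum_\ell L_n(\ell)\,t^{\ell-1}=A_n(t)$. The points requiring the most care are the leaf convention (so that paths have a single leaf and the internal/leaf bookkeeping in the recursion is correct) and the verification that the excedance statistic on $S_n$ is Eulerian, which I would cite rather than reprove.
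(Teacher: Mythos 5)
Your proposal is correct, and three of its four parts follow essentially the same route as the paper: at $t=0$ you identify one-leaf trees with paths (hence permutations, with matching inversion statistics) and excedance-free parking functions with sequences satisfying $\pi_i\leq i$ (Lehmer-type codes), and at $q=0$ you identify cosum-zero parking functions with permutations and cite the classical excedance-Eulerian fact; your only additions there are welcome details, e.g.\ the explicit factorization $q^{\binom{n+1}{2}}\prod_{i=1}^{n}q^{-i}[i]_q=[n]_q!$ spelling out what the paper calls ``evident,'' and the check that $\pi_i\leq i$ automatically implies the parking condition. Where you genuinely diverge is $I_n(0,t)=A_n(t)$: the paper proves this bijectively, reading off a permutation from an increasing tree by a depth-first search that always prefers the largest available label, under which leaves correspond to descents except for the ``last'' leaf; you instead argue by induction, observing that vertex $n$ must be a leaf of any increasing tree, so that deletion/insertion gives the recurrence $L_n(\ell)=\ell\,L_{n-1}(\ell)+(n-\ell+1)L_{n-1}(\ell-1)$ for the leaf-count distribution, which is exactly the Eulerian recurrence under $\ell=k+1$. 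Your recursion (which is correct -- the bookkeeping of leaf versus internal attachment sites checks out, given your convention that the root is never a leaf, which is also the convention the paper's formula $t^{\mathrm{lev}(T)-1}$ requires) is more elementary and self-contained, avoiding any verification that the DFS reading word is a bijection taking leaves to descents; the paper's bijection is more structural and carries more information, since an explicit tree-to-permutation correspondence can track additional statistics of the kind appearing in the Stanley--Yin theorem quoted later in the paper. Either argument completes a valid proof of the theorem.
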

\begin{proof}
To show $I_n(q,0)=[n]_q!$: this is clear because trees with one leaf are permutations in a natural way, and the tree inversions are inversions of the permutation.

To show $\widetilde{I}_n(q,0) = [n]_q!$: parking functions $\pi\in \mathrm{PF}(n)$ with $\mathrm{exced}(\pi)=0$ are sequences of integers $(\pi_1,\pi_2,\ldots,\pi_n)$ with $1 \leq \pi_i \leq i$ for all $i$. (These are essentially Lehmer codes of permutations.) The cosum enumerator of these is evidently $[n]_q!$.

To show $I_n(0,t)=A_n(t)$: it is well-known that trees $T \in \mathrm{Tree}(n+1)$ with $\mathrm{inv}(T)=0$, i.e., \emph{increasing trees}, are also in bijection with permutations in $S_n$. Indeed, given such an increasing tree, perform a depth-first search on the tree, starting at the root $0$, and always preferring to visit the vertex with the biggest label when one has a choice. Record the order in which vertices are visited in this search to obtain a permutation. This is a bijective procedure. Moreover, the leaves of the tree correspond to the descents of the permutation, except for the ``last'' leaf.

To show $\widetilde{I}_n(0,t)=A_n(t)$: parking functions with cosum zero are permutations, and it is well-known that the generating function of permutations by excedances is the Eulerian polynomial.
\end{proof}

\begin{table}
\begin{center}
\renewcommand{\arraystretch}{1.25}
\begin{tabular}{ c|c|c } 
$n$ & $I_n(q,t)$ & $\widetilde{I}_n(q,t)$ \\ \hline
$1$ & $1$ & $1$ \\ \hline
$2$ & $q+t+1$ & $q+t+1$ \\ \hline
$3$ & $q^3 \! + q^2t \! + 2q^2 \! + 4qt \! + t^2 \! + 2q \! + 4t \! + 1$ & $q^3 \! + q^2t \! + 2q^2 \! + 4qt \! + t^2 \! + 2q \! + 4t \! + 1$ \\ \hline
$4$ & \parbox{2.5in}{\begin{center}$q^6 + q^5t + 3q^5 + 5q^4t + q^3t^2 + 5q^4 + 13q^3t + 5q^2t^2 + 6q^3 + 20q^2t + \mathbf{11}qt^2 + t^3 + 5q^2 + \mathbf{22}qt + 11t^2 + 3q + 11t + 1
$\end{center}} & \parbox{2.5in}{\begin{center}$q^6 + q^5t + 3q^5 + 5q^4t + q^3t^2 + 5q^4 + 13q^3t + 5q^2t^2 + 6q^3 + 20q^2t + \mathbf{12}qt^2 + t^3 + 5q^2 + \mathbf{21}qt + 11t^2 + 3q + 11t + 1$\end{center}} \\
\end{tabular}
\end{center}
\caption{The two $t$-analogues $I_n(q,t)$ and $\widetilde{I}_n(q,t)$ of $I_n(q)$ for small values of $n$. Notice that they differ for $n=4$.} \label{tab:qts}
\end{table}

But in spite of these similarities, $I_n(q,t)$ and $\widetilde{I}_n(q,t)$ are not the same in general. Table~\ref{tab:qts} displays $I_n(q,t)$ and $\widetilde{I}_n(q,t)$ for $n=1,\ldots,4$. Notice that $I_4(q,t)\neq \widetilde{I}_4(q,t)$. In fact, even their $q=1$ specializations are different:
\[I_4(1,t) = t^3 + 28t^2 + 72t + 24 \neq t^3 + 29t^2 + 71t + 24=\widetilde{I}_4(q,t).\] 

In order to compare $I_n(q,t)$ and $\widetilde{I}_n(q,t)$, it is helpful to view both of them as generating functions on the same set of objects. Actually, $I_n(q,t)$ was already studied by Stanley and Yin~\cite{stanley2023enumerative}, and they were able to express it as a generating function of natural statistics on parking functions. (More precisely, Stanley and Yin studied two \emph{four variable} generating functions, one over labeled trees and one over parking functions, and showed that they are equal. Our $I_n(q,t)$ is a two variable specialization of their four variable generating function.)

To explain what Stanley and Yin showed about $I_n(q,t)$, we need to recall the parking procedure for parking functions. Imagine $n$ cars, in sequence, drive down a one-way street, which has parking spots labeled $1$ to $n$. The~$i$th car prefers spot $\pi_i$ and will park there if possible, but if that spot is occupied they will continue driving and park at the next unoccupied spot, if there is one. The sequence $\pi=(\pi_1,\ldots,\pi_n)$ of parking spot preferences results in all cars successfully parking if and only if $\pi$ is a parking function. For $\pi \in \mathrm{PF}(\pi)$ we use $\mathrm{oc}(\pi)$ to denote the \emph{parking outcome permutation}, i.e., $\mathrm{oc}(\pi)=\sigma_1\sigma_2\ldots \sigma_n$ means that car $i$ ends up at spot $\sigma_1$. Also note that if $\mathrm{oc}(\pi)^{-1}=\tau_1\ldots \tau_n$ this means that car $\tau_i$ ends up in spot~$i$. For example, if~$\pi = (2, 2, 1, 3)$ then $\mathrm{oc}(\pi) = 2314$ and $\mathrm{oc}(\pi)^{-1}
= 3124$.

\begin{thm}[Stanley--Yin~\cite{stanley2023enumerative}]
$I_n(q,t) = \sum_{\pi \in \mathrm{PF}(n)}q^{\mathrm{cosum}(a)}t^{\mathrm{des}(\mathrm{oc}(\pi)^{-1})}$.
\end{thm}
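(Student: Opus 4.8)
The plan is to prove the statement by exhibiting a bijection $\Phi\colon \mathrm{Tree}(n+1)\to \mathrm{PF}(n)$ that transports the joint tree statistic to the joint parking statistic, i.e.\ so that $\mathrm{inv}(T)=\mathrm{cosum}(\Phi(T))$ and $\mathrm{lev}(T)-1=\mathrm{des}(\mathrm{oc}(\Phi(T))^{-1})$. Granting such a $\Phi$, summing $q^{\bullet}t^{\bullet}$ over $\mathrm{Tree}(n+1)$ gives the identity immediately, since $I_n(q,t)=\sum_T q^{\mathrm{inv}(T)}t^{\mathrm{lev}(T)-1}$ by definition.

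The $q$-coordinate alone is classical: the marginal identity $\sum_T q^{\mathrm{inv}(T)}=\sum_\pi q^{\mathrm{cosum}(\pi)}$ is exactly Kreweras's result, and it is realized by several explicit bijections. The point is that matching this marginal does \emph{not} control the second statistic, so the content of the theorem is a finer joint compatibility. For example, at $n=3$ there are six trees with $\mathrm{inv}=1$, of which two have one leaf and four have two leaves; correspondingly there are six parking functions with $\mathrm{cosum}=1$, of which two have $\mathrm{des}(\mathrm{oc}^{-1})=0$ and four have $\mathrm{des}(\mathrm{oc}^{-1})=1$. A valid $\Phi$ must honor this split into $2$ and $4$, whereas a bijection chosen only to realize $\mathrm{inv}\leftrightarrow\mathrm{cosum}$ generally will not. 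Controlling both statistics at once is thus the heart of the matter, and it is delicate precisely because $\mathrm{lev}$ is local to $T$ while $\mathrm{des}(\mathrm{oc}^{-1})$ depends on the global parking outcome.

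Concretely, I would construct the inverse map $\mathrm{PF}(n)\to \mathrm{Tree}(n+1)$ through the parking process itself, reading the parent structure of the tree off the dynamics of the cars. A natural starting rule declares the parent of the car resting in spot $s$ to be a car resting in an earlier spot; since every such parent sits in a strictly smaller spot, the resulting graph is automatically acyclic and rooted at the entrance $0$, hence a tree. The task is then to calibrate this rule so that simultaneously (i) the preference deficiency $\binom{n+1}{2}-\sum_i \pi_i$ equals $\mathrm{inv}(T)$, and (ii) the leaves of $T$ are exactly the spots $i$ at which the car in spot $i$ has larger label than the car in spot $i+1$, i.e.\ the descents of $\mathrm{oc}^{-1}$. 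I expect to prove (ii) by induction on $n$, removing the car in the last occupied spot (whose removal leaves a genuine parking function of length $n-1$) together with its image vertex, and tracking how exactly one leaf and one outcome-descent are created or destroyed in the process.

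The hard part will be (ii). Unlike $\mathrm{inv}\leftrightarrow\mathrm{cosum}$, which is additive and local, the descent set of the parking outcome can change nonlocally when a car is inserted, so the inductive step must pin down precisely which spot becomes, or ceases to be, a descent. The boundary cases of the previous Theorem are both a sanity check and the base of the induction: the $t=0$ slice forces agreement on increasing trees and Lehmer-type parking functions, and the $q=0$ slice forces the Eulerian/excedance behavior. As an alternative to a bijection, one could instead look for a common $(q,t)$-refinement of Kreweras's recurrence~\eqref{eqn:recur} satisfied by $\sum_\pi q^{\mathrm{cosum}(\pi)}t^{\mathrm{des}(\mathrm{oc}^{-1})}$ and conclude by induction; but the obstacle is the same, since neither the leaf count nor $\mathrm{des}(\mathrm{oc}^{-1})$ is visibly additive across the binary decomposition underlying~\eqref{eqn:recur}, and the work would again lie in accounting for the single leaf, respectively the single descent, produced at the gluing step.
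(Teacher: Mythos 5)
There is a genuine gap: what you have written is a research plan, not a proof. The entire content of the theorem is concentrated in the step you defer --- namely, actually exhibiting the parent rule and verifying properties (i) and (ii). Saying that the parent of the car in spot $s$ should be ``a car resting in an earlier spot,'' and that one must ``calibrate this rule'' so that inversions become cosum and leaves become outcome-descents, is a restatement of the problem, not a solution to it: which earlier car is the parent is exactly the question, and the existence of a calibration with both properties simultaneously is exactly the assertion to be proved. The inductive step for (ii) is likewise only announced (``I expect to prove\ldots''), and you yourself identify the nonlocal behavior of the outcome-descent set under insertion of a car as the hard point without resolving it. Since the paper states this theorem as a cited result of Stanley and Yin~\cite{stanley2023enumerative} and gives no proof of its own, the full burden of the argument falls on your construction, and none of it is carried out.

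There is also a concrete error in the target you set yourself: property (ii) as you state it cannot hold. You need $\mathrm{lev}(T)-1=\mathrm{des}(\mathrm{oc}(\pi)^{-1})$, but you ask for the leaves of $T$ to be \emph{exactly} the spots $i$ where the car in spot $i$ has a larger label than the car in spot $i+1$; that would give $\mathrm{lev}(T)=\mathrm{des}(\mathrm{oc}(\pi)^{-1})$, off by one. (Compare the proof of the first theorem in the paper, where the leaves of an increasing tree correspond to the descents of the associated permutation \emph{except for the last leaf}; the same kind of exception must be built into your correspondence, e.g.\ excluding the car in the last occupied spot, and this exceptional leaf must then be tracked through your proposed induction.) Your $n=3$ consistency check is correct as far as it goes, but it only confirms the statement in a small case; it does not substitute for defining $\Phi$ and proving that it has the two required properties.
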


On the other hand, $\widetilde{I}_n(q,t)$ is not a specialization of the generating function studied by Stanley and Yin. Nevertheless, computational evidence strongly suggests the following, which would make $I_n(q,t)$ and $\widetilde{I}_n(q,t)$ appear very similar indeed.

\begin{conj} \label{conj:des}
$\widetilde{I}_n(q,t) = \sum_{\pi \in \mathrm{PF}(n)}q^{\mathrm{cosum}(a)}t^{\mathrm{des}(\mathrm{oc}(\pi))}$.
\end{conj}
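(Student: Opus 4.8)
The plan is to transport everything onto the outcome permutation. First I would reinterpret the $q$-statistic: since car $i$ prefers spot $\pi_i$ and parks in spot $\sigma_i:=\mathrm{oc}(\pi)_i \ge \pi_i$, the quantity $\mathrm{cosum}(\pi)=\binom{n+1}{2}-\sum_i\pi_i=\sum_i(\sigma_i-\pi_i)$ is exactly the total displacement of $\pi$. Next I would fiber $\mathrm{PF}(n)$ over the outcome map. For $\sigma\in S_n$ let $r_i(\sigma)$ be the largest $R\ge 0$ such that the values $\sigma_i-1,\ldots,\sigma_i-R$ all occur to the left of position $i$ in $\sigma$ (equivalently, the length of the maximal block of spots immediately below $\sigma_i$ occupied by cars arriving before car $i$). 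A short check with the parking procedure shows that $\mathrm{oc}^{-1}(\sigma)$ is parametrized by \emph{independent} displacements $d_i\in\{0,1,\ldots,r_i(\sigma)\}$, via $\pi_i=\sigma_i-d_i$. Because $\mathrm{des}(\mathrm{oc}(\pi))=\mathrm{des}(\sigma)$ is constant on each fiber while $\mathrm{cosum}(\pi)=\sum_i d_i$, this yields the clean formula
\[
\sum_{\pi\in\mathrm{PF}(n)} q^{\mathrm{cosum}(\pi)}t^{\mathrm{des}(\mathrm{oc}(\pi))}=\sum_{\sigma\in S_n} t^{\mathrm{des}(\sigma)}\prod_{i=1}^n [r_i(\sigma)+1]_q .
\]

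On the other side, within the same fiber the car at position $i$ is an excedance exactly when $d_i<\sigma_i-i$, so $\mathrm{exced}(\pi)=\#\{i:d_i<\max(0,\sigma_i-i)\}$ and
\[
\widetilde I_n(q,t)=\sum_{\sigma\in S_n}\ \prod_{i=1}^n\Big(\sum_{d=0}^{r_i(\sigma)} q^{d}\,t^{\,\varepsilon_i(d)}\Big),\qquad \varepsilon_i(d)=\begin{cases}1,& d<\max(0,\sigma_i-i),\\ 0,&\text{otherwise.}\end{cases}
\]
Conjecture~\ref{conj:des} is therefore equivalent to the equality of these two sums over $S_n$, an identity purely about permutations and the run-statistic $r_i$. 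This reformulation already explains several features: at $t=1$ both sides collapse to $\sum_\sigma\prod_i[r_i+1]_q=I_n(q)$, and the $q^0$-coefficients are $\sum_\sigma t^{\mathrm{exced}(\sigma)}$ on the left and $\sum_\sigma t^{\mathrm{des}(\sigma)}$ on the right, which agree by the classical equidistribution of excedances and descents on $S_n$ (e.g.\ via Foata's bijection). This handles both the lowest-degree-in-$q$ coincidence and the permutation content class.

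To prove the reformulated identity I would try to match recurrences in the spirit of Kreweras's recurrence~\eqref{eqn:recur}. Concretely, I would seek a two-block splitting of parking functions --- cutting at the car occupying the last spot, or at a distinguished prime component --- under which $\mathrm{cosum}$ adds, the factor $[i+1]_q$ arises from the displacement freedom of the cut car, and both $\mathrm{exced}$ and $\mathrm{des}(\mathrm{oc}(\cdot))$ decompose additively across the two blocks; establishing the same splitting recurrence for both generating functions would finish the proof. A parallel attack, organized by content rather than by outcome, is to note that $\mathrm{cosum}$ is constant on each content (multiset) class and then to build, class by class, a bijection that upgrades the Foata map so as to also control the multiset $\{r_i\}$. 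I would first settle the one-variable $q=1$ case, namely the equidistribution of $\mathrm{exced}(\pi)$ and $\mathrm{des}(\mathrm{oc}(\pi))$ on $\mathrm{PF}(n)$, as a testing ground.

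I expect the main obstacle to be the nonlocal nature of the run-statistic $r_i(\sigma)$, which is precisely where the outcome map hides: on the right-hand side the $t$-weight is the \emph{global} statistic $\mathrm{des}(\sigma)$, whereas on the left it is assembled from \emph{per-position} excedance contributions that are themselves coupled to $r_i$ through the cutoff $\max(0,\sigma_i-i)$. A naive single-element insertion does not produce a clean recurrence: the relevant local insertion sum depends on the whole arrangement $\bar\sigma$, not merely on $\mathrm{des}(\bar\sigma)$ (one checks this already for $n=3$). Likewise, the classical excedance-to-descent bijections do not obviously preserve $\{r_i\}$. The crux is thus to find a decomposition or bijection that simultaneously tracks displacements, descents of the outcome, and excedances.
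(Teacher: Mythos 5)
You have not proved Conjecture~\ref{conj:des}, and there is nothing in the paper to compare your argument against: the statement is left entirely open there (the paper explicitly says that a proof ``may need to define a nontrivial bijection on $\mathrm{PF}(n)$''). What you have done correctly is reformulate the conjecture. Your fiber analysis of the outcome map is right: for $\sigma\in S_n$ the parking functions with $\mathrm{oc}(\pi)=\sigma$ are exactly those of the form $\pi_i=\sigma_i-d_i$ with $0\leq d_i\leq r_i(\sigma)$, the choices being independent across $i$; and since the occupied spots are all of $\{1,\ldots,n\}$, indeed $\mathrm{cosum}(\pi)=\sum_i d_i$ is the total displacement. So both sides of the conjecture become sums over $S_n$,
\[
\sum_{\sigma\in S_n} t^{\mathrm{des}(\sigma)}\prod_{i=1}^{n}[r_i(\sigma)+1]_q
\qquad\text{versus}\qquad
\sum_{\sigma\in S_n}\,\prod_{i=1}^{n}\Bigl(\sum_{d=0}^{r_i(\sigma)}q^{d}\,t^{\varepsilon_i(d)}\Bigr),
\]
with $\varepsilon_i(d)=1$ if $d<\max(0,\sigma_i-i)$ and $0$ otherwise. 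Your consistency checks are also correct, but note that they carry no new information: the $t=1$ collapse is Kreweras's identity~\eqref{eq:inv_enum_pf}, and the agreement of the $q^0$ coefficients (excedance versus descent Eulerian polynomials) is exactly the $q=0$ content of Theorem~1 of the paper.

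The genuine gap is that the reformulated identity is precisely as open as the original conjecture, and nothing in your proposal closes it. Both strategies you offer are prospective: you do not exhibit the two-block splitting under which $\mathrm{exced}$ and $\mathrm{des}(\mathrm{oc}(\cdot))$ both decompose --- and you yourself verify that naive single-element insertion fails already at $n=3$, because the local insertion weight depends on the whole prefix arrangement rather than on its descent count alone --- nor do you exhibit a Foata-type bijection that controls the multiset $\{r_i(\sigma)\}$, noting correctly that the classical ones do not. The obstacle you name (a global statistic $\mathrm{des}(\sigma)$ on one side against per-position excedance contributions coupled to $r_i(\sigma)$ through the cutoff $\max(0,\sigma_i-i)$ on the other) is not a technical nuisance to be cleaned up later; it is the open problem itself. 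What you have is a correct and potentially useful change of variables plus a research plan, not a proof; the conjecture remains unresolved.
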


Note that for $\pi \in \mathrm{PF}(n)$, $\mathrm{exced}(\pi)$ and $\mathrm{des}(\mathrm{oc}(\pi))$ will in general be different. For example for $\pi=(4,2,1,3)$, $\mathrm{exed}(\pi)=1$ but $\mathrm{des}(\mathrm{oc}(\pi))=\mathrm{des}(4213)=2$. So to prove Conjecture~\ref{conj:des}, one may need to define a nontrivial bijection on $\mathrm{PF}(n)$.

\begin{remark}
Stanley and Yin~\cite{stanley2023enumerative} showed that $I_n(q,t)$ satisfies the following $t$-deformation of the recurrence~\eqref{eqn:recur}:
\[I_n(q,t) \hspace{-0.05cm} = \hspace{-0.05cm} (1+q+\cdots+q^{n-1})I_{n-1}(q,t) + t\cdot \sum_{i=0}^{n-2}\binom{n-1}{i}(1+q+\cdots+q^i)I_i(q,t)I_{n-1-i}(q,t).\]
We have not been able to guess a recurrence like this for $\widetilde{I}_n(q,t)$. Relatedly, we have not been able to guess how to express $\widetilde{I}_n(q,t)$ as a generating function over trees.
\end{remark}

Recall a permutation $\sigma=\sigma_1\cdots\sigma_n\in S_n$ is \emph{alternating} if $\sigma_1 < \sigma_2 > \sigma_3 < \cdots$. The number of alternating permutations in $S_n$ is given by the \emph{zigzag number} (a.k.a.~``\emph{Euler number}'')~$E_n$. The sequence of $E_n$, for $n \geq 0$, begins $1, 1, 1, 2, 5, 16, 61, 272,  \ldots$. One remarkable property of the tree inversion enumerator is that $I_n(-1) = E_n$. (This can be proved directly from the recurrence~\eqref{eqn:recur}; see also, e.g.,~\cite{kuznetsov1994increasing}.)

Our most interesting conjectures about $I_n(q,t)$ and $\widetilde{I}_n(q,t)$ concern this $q=-1$ specialization. Table~\ref{tab:minusone} displays $I_n(-1,t)$ and $\widetilde{I}_n(-1,t)$ for small $n$. Experimentally, it appears that we obtain two different, natural refinements of the zigzag numbers~$E_n$: what we might call the ``\emph{Andr\'{e}}'' or ``\emph{simsun Eulerian numbers}'' $B(n,k)$, and the ``\emph{zigzag Eulerian numbers}'' $C(n,k)$.

\begin{table}
\begin{center}
\renewcommand{\arraystretch}{1.25}
\begin{tabular}{ c|c|c } 
$n$ & $I_n(-1,t)$ & $\widetilde{I}_n(-1,t)$ \\ \hline
$1$ & $1$ & $1$ \\ \hline
$2$ & $t$ & $t$ \\ \hline
$3$ & $t^2+t$ & $t^2+t$ \\ \hline
$4$ & $t^3 + 4t^2$ & $t^3 + 3t^2 + t$ \\ \hline
$5$ & $t^4 + 11t^3 + 4t^2$ & $t^4 + 7t^3 + 7t^2 + t$ \\ \hline
$6$ & $t^5 + 26t^4 + 34t^3$ & $t^5 + 14t^4 + 31t^3 + 14t^2 + t$ \\ \hline
$7$ & $t^6 + 57t^5 + 180t^4 + 34t^3$ & $t^6 + 26t^5 + 109t^4 + 109t^3 + 26t^2 + t$
\end{tabular}
\end{center}
\caption{The specializations of $I_n(q,t)$ and $\widetilde{I}_n(q,t)$ at $q=-1$.} \label{tab:minusone}
\end{table}

\begin{conj} For $n\geq 1$, write
\[I_n(-1,t)=\sum_{k=0}^{\lfloor (n-1)/2\rfloor}B(n,k)t^{n-1-k} \textrm{ and } \widetilde{I}_n(-1,t)=\sum_{k=0}^{n-2}C(n,k)t^{k+1}.\]
Then:
\begin{enumerate}
\item $B(n,k)$ is the number of ``\emph{simsun permutations}'' in $S_{n-1}$ with $k$ descents (see, e.g.,~\cite{chow2011counting} and \url{https://oeis.org/A094503});
\item $C(n,k)$ is the number of alternating permutations in $S_n$ with $k$ ``\emph{big returns}'' (see, e.g.,~\cite{petersen2025zigzag} and \url{https://oeis.org/A205497}).
\end{enumerate}
\end{conj}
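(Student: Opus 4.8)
The two parts rest on different structures, so I would treat them separately. Part~(1) is amenable to the recurrence of Stanley and Yin, whereas part~(2) appears to require a direct combinatorial argument on parking functions, since no recurrence for $\widetilde{I}_n(q,t)$ is known.

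For part~(1), the first step is to set $q=-1$ in the Stanley--Yin recurrence. Because $(1+q+\cdots+q^{m-1})|_{q=-1}$ equals $1$ for $m$ odd and $0$ for $m$ even, the recurrence collapses to
\[I_n(-1,t)=\epsilon_n\,I_{n-1}(-1,t)+t\!\!\sum_{\substack{0\le i\le n-2\\ i\ \mathrm{even}}}\!\!\binom{n-1}{i}I_i(-1,t)\,I_{n-1-i}(-1,t),\]
where $\epsilon_n=1$ if $n$ is odd and $\epsilon_n=0$ if $n$ is even. Writing $P_m(u)=\sum_{\sigma}u^{\mathrm{des}(\sigma)}$ for the descent generating polynomial of simsun permutations of $S_m$, part~(1) is precisely the assertion $I_n(-1,t)=t^{n-1}P_{n-1}(1/t)$. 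Substituting this ansatz into the displayed recurrence and clearing powers of $t$ turns it into the equivalent recurrence
\[P_m(u)=\bigl(1+[m\ \mathrm{even}]\,u\bigr)P_{m-1}(u)+u\!\!\sum_{\substack{1\le j\le m-2\\ j\ \mathrm{odd}}}\!\!\binom{m}{j+1}P_j(u)\,P_{m-2-j}(u),\]
which one checks directly for small $m$. The crux of part~(1) is therefore to prove this recurrence combinatorially for simsun permutations. I would attempt a recursive decomposition of a simsun permutation of $S_m$ at a distinguished letter (the largest letter, or the pair of two largest letters), splitting the word into a left and a right factor: the coefficient $\binom{m}{j+1}$ should record the choice of values sent left, and the parity restriction ``$j$ odd'' should emerge from the requirement that no double descent survive the successive restrictions defining the simsun condition. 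Equivalently, one may package the recurrence as the differential equation $G'=G_e\,(1-t+tG)$ for the generating function $G(x)=\sum_n I_n(-1,t)\,x^n/n!$ (with $G_e$ its even part) and match it against the known Riccati-type equation governing simsun permutations and André polynomials; at $t=1$ this specializes to the classical identity $(\sec+\tan)'=\sec\cdot(\sec+\tan)$. Alternatively, one could try to refine the sign-reversing involution on trees used to prove $I_n(-1)=E_n$ so that it preserves the leaf statistic, its fixed points then matching simsun permutations with leaves tracking descents.

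For part~(2), the lack of a recurrence for $\widetilde{I}_n(q,t)$ forces a more hands-on approach. Starting from $\widetilde{I}_n(-1,t)=\sum_{\pi\in\mathrm{PF}(n)}(-1)^{\mathrm{cosum}(\pi)}t^{\mathrm{exced}(\pi)}$, I would seek a sign-reversing involution on $\mathrm{PF}(n)$ that preserves $\mathrm{exced}(\pi)$ while toggling the parity of $\mathrm{cosum}(\pi)$, so that all non-fixed parking functions cancel in pairs, and then identify the $E_n$ surviving fixed points, via an explicit (necessarily nontrivial) bijection, with the alternating permutations of $S_n$, under which the excedance count of a fixed point becomes the number of big returns shifted by one (to match the $t^{k+1}$ indexing). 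A strong structural guide is that $\widetilde{I}_n(-1,t)$ is palindromic, $t^n\widetilde{I}_n(-1,1/t)=\widetilde{I}_n(-1,t)$, reflecting the symmetry $C(n,k)=C(n,n-2-k)$; establishing this palindromicity directly --- via a parity-preserving involution that complements the excedance count --- would be a natural first step and a sharp consistency check on any candidate involution.

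I expect the involution and fixed-point bijection of part~(2) to be the main obstacle overall: there is no recurrence to lean on, and $\mathrm{exced}(\pi)$ and the big-return statistic on alternating permutations are genuinely different, so the involution cannot be a purely local move. A promising intermediate step is to first settle Conjecture~\ref{conj:des}, rewriting $\widetilde{I}_n(q,t)$ in terms of $\mathrm{des}(\mathrm{oc}(\pi))$; the descents of the outcome permutation may align more transparently with big returns than the raw excedances do, thereby reducing part~(2) to a statement about outcome permutations at $q=-1$. Since Conjecture~\ref{conj:des} is itself open, however, this route trades one difficulty for another.
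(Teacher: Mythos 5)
The first thing to recognize is that this statement is one of the paper's \emph{conjectures}: the author supports it only with the computational data in Table~\ref{tab:minusone} and offers no proof, so there is no argument in the paper to compare yours against; the relevant question is whether your proposal actually closes the gap, and it does not. For part~(1), your algebra is correct and is a genuine reduction: setting $q=-1$ in the Stanley--Yin recurrence does collapse it as you say (since $[m]_q$ at $q=-1$ is $1$ for $m$ odd and $0$ for $m$ even), and writing $I_n(-1,t)=t^{n-1}P_{n-1}(1/t)$ does transform the collapsed recurrence into the quadratic recurrence you display for the simsun descent polynomials, with the $i=0$ term of the sum supplying the $1$ in $(1+[m\ \mathrm{even}]\,u)$. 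But that recurrence for simsun permutations is exactly where the mathematical content lies, and there your proposal stops: ``one checks directly for small $m$'' verifies nothing in general; the proposed decomposition at the largest letter is described only in the conditional mood (the binomial coefficient ``should record'' a choice, the parity restriction ``should emerge''); and the generating-function route invokes a ``known Riccati-type equation'' for simsun/Andr\'{e} polynomials that you neither state nor match against $G'=G_e(1-t+tG)$ --- an equation you also do not derive from the collapsed recurrence. The known recurrences for simsun permutations (e.g., the linear descent recurrence used in~\cite{chow2011counting}) are not obviously equivalent to your quadratic one, so that equivalence is itself a nontrivial missing step.

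Part~(2) is in worse shape: nothing there is constructed. The sign-reversing involution on $\mathrm{PF}(n)$ is only postulated, its fixed-point set is not identified, and the bijection from fixed points to alternating permutations sending $\mathrm{exced}$ to big returns (shifted to match the $t^{k+1}$ indexing) is not given; these three items \emph{are} the conjecture, so this portion of the proposal amounts to restating it in involution language. Your palindromicity observation $t^n\widetilde{I}_n(-1,1/t)=\widetilde{I}_n(-1,t)$ is consistent with the table and with the known symmetry $C(n,k)=C(n,n-2-k)$ of the zigzag Eulerian numbers, but it is likewise unproven here, and even if established it would only be a consistency check, not a proof. Finally, the suggested shortcut through Conjecture~\ref{conj:des} is, as you yourself concede, circular in difficulty: it replaces one open conjecture with another. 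In summary, you have made a correct and potentially useful reduction of part~(1) to a statement purely about simsun permutations, but both parts of the conjecture remain open under your proposal.
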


\subsection*{Acknowledgments} 
I would like to thank the organizers and participants in the Special Session on Parking Functions and their Generalizations at the 2025 Spring Eastern Sectional Meeting of the AMS in St.~Louis, MO. This conference inspired me to think about trees and parking functions again after a long hiatus.

\begin{remark}
After the first version of this note was posted online, the conjectures here were all resolved, affirmatively, in~\cite{defant2026short}. The proofs there were obtained autonomously by ChatGPT 5.4 Pro.
\end{remark}

\bibliography{main}{}

@article {chow2011counting,
    AUTHOR = {Chow, Chak-On and Shiu, Wai Chee},
     TITLE = {Counting simsun permutations by descents},
   JOURNAL = {Ann. Comb.},
  FJOURNAL = {Annals of Combinatorics},
    VOLUME = {15},
      YEAR = {2011},
    NUMBER = {4},
     PAGES = {625--635},
      ISSN = {0218-0006,0219-3094},
   MRCLASS = {05A05 (05A15 05A19 05E05 05E10)},
  MRNUMBER = {2854784},
MRREVIEWER = {Matthieu\ Josuat-Verg\`es},
       DOI = {10.1007/s00026-011-0113-6},
       URL = {https://doi.org/10.1007/s00026-011-0113-6},
}

@misc {defant2026short,
	author= {Colin Defant},
	title = {Short Proofs in Algebraic and Enumerative Combinatorics},
      YEAR = {2026},
     howpublished = {Eprint at \arxiv{2605.19979}}
}

@article {kreweras1980famille,
    AUTHOR = {Kreweras, G.},
     TITLE = {Une famille de polyn\^omes ayant plusieurs propri\'et\'es
              \'enumeratives},
   JOURNAL = {Period. Math. Hungar.},
  FJOURNAL = {Periodica Mathematica Hungarica. Journal of the J\'anos Bolyai
              Mathematical Society},
    VOLUME = {11},
      YEAR = {1980},
    NUMBER = {4},
     PAGES = {309--320},
      ISSN = {0031-5303,1588-2829},
   MRCLASS = {05A15 (05C05)},
  MRNUMBER = {603398},
MRREVIEWER = {J.\ W.\ Moon},
       DOI = {10.1007/BF02107572},
       URL = {https://doi.org/10.1007/BF02107572},
}

@article {kuznetsov1994increasing,
    AUTHOR = {Kuznetsov, A. G. and Pak, I. M. and Postnikov, A. E.},
     TITLE = {Increasing trees and alternating permutations},
   JOURNAL = {Uspekhi Mat. Nauk},
  FJOURNAL = {Uspekhi Matematicheskikh Nauk},
    VOLUME = {49},
      YEAR = {1994},
    NUMBER = {6(300)},
     PAGES = {79--110},
      ISSN = {0042-1316,2305-2872},
   MRCLASS = {05C05 (05C30)},
  MRNUMBER = {1316867},
MRREVIEWER = {Witold\ Kra\'skiewicz},
       DOI = {10.1070/RM1994v049n06ABEH002448},
       URL = {https://doi.org/10.1070/RM1994v049n06ABEH002448},
}

@article{petersen2025zigzag,
    AUTHOR = {Petersen, T. Kyle and Zhuang, Yan},
     TITLE = {Zig-zag {E}ulerian polynomials},
   JOURNAL = {European J. Combin.},
  FJOURNAL = {European Journal of Combinatorics},
    VOLUME = {124},
      YEAR = {2025},
     PAGES = {Paper No. 104073, 30},
      ISSN = {0195-6698,1095-9971},
   MRCLASS = {05A15 (05A17)},
  MRNUMBER = {4799286},
MRREVIEWER = {Ruriko\ Yoshida},
       DOI = {10.1016/j.ejc.2024.104073},
       URL = {https://doi.org/10.1016/j.ejc.2024.104073},
}

@article {stanley2023enumerative,
    AUTHOR = {Stanley, Richard P. and Yin, Mei},
     TITLE = {Some enumerative properties of parking functions},
   JOURNAL = {Comb. Theory},
  FJOURNAL = {Combinatorial Theory},
    VOLUME = {5},
      YEAR = {2025},
    NUMBER = {4},
     PAGES = {Paper No. 13, 39},
      ISSN = {2766-1334},
   MRCLASS = {05A15 (05A19 60C05)},
  MRNUMBER = {5024855},
}
\bibliographystyle{abbrv}

\end{document}